\theoremstyle{plain}
\newtheorem{thm}{Theorem}[section]
\newtheorem{cor}{Corollary}[section]
\newtheorem{prop}{Proposition}[section]
\theoremstyle{definition}
\newtheorem{rmk}{Remark}[section]
\newtheorem{definition}{Definition}[section]
\numberwithin{equation}{section}
\def \I{\hbox{I}}
\def \d{\hbox{d}}
\newcommand{\R}{\mathbb{R}}
\newcommand{\Z}{\mathbb{Z}}
\title{\textbf{Heat Kernel Analysis for Ornstein-Uhlenbeck Operators with Quadratic Potentials }}
\author{Sheng-Ya Feng}
\date{}
\begin{document}
\maketitle

\begin{abstract}
\noindent
In this paper, we study Ornstein-Uhlenbeck operators with quadratic potentials. We use Hamiltonian formalism to characterise the singularities produced by the potentials by finding explicit geodesics of the operators, and obtain the heat kernels via a probabilistic ansatz. All the formulae are closed.

\bigskip
\hspace{-15pt}\textbf{Key Words:} Hamiltonian formalism, Ornstein-Uhlenbeck operator, heat kernel\\
\textbf{MSC 2010:} Primary: 35J05; Secondary: 35F21, 15A24
\end{abstract}

\bigskip
\section{Introduction}

\bigskip
\subsection{Background}

\bigskip
Ornstein-Uhlenbeck operator is a Linear operator 
\begin{equation*}
T=-\Delta+x\cdot\nabla
\end{equation*}
acting on $L^{2}(\R^{n}; \mu_{n})$, where 
\begin{equation*}
\mu_{n}(E)=\frac{1}{(2\pi)^{n/2}}\int_{E}\exp\left( -\frac{|x|^{2}}{2}\right)\d x
\end{equation*}
is the Gaussian measure, $\Delta=\sum_{j=1}^{n}\partial_{x_{j}}^{2}$ and $\nabla=(\partial_{x_{1}}, \partial_{x_{2}}, \ldots, \partial_{x_{n}})$ denote Laplace operator and gradient, and  $x\cdot y=\sum_{j=1}^{n}x_{j}y_{j}$ is the canonical inner product in $\R^{n}$ with an extension $x\cdot\nabla=\sum_{j=1}^{n}x_{j}\partial_{x_{j}}$. 

\bigskip
For $s\geq 0$ and $f\in L^{2}(\R^{n}; \mu_{n})$,
\begin{equation*}
e^{-sT}f(x)=\int_{\R^{n}}f\left(e^{-s}x+\sqrt{1-e^{-2s}}\,y \right)\d \mu_{n}(y) 
\end{equation*}
is the Mehler formulae for the associated Ornstein-Uhlenbeck semigroup $\left\lbrace e^{-sT}\right\rbrace _{s\geq 0}$. Readers may consult \cite{B98}, \cite{FP94} and \cite{H05} for more elementary properties on Ornstein-Uhlenbeck operator and semigroup, and \cite{H99}, \cite{P73}, \cite{N73} for further interests. 

\bigskip
Ornstein-Uhlenbeck operators in the whole space $L^{p}(\R^{n})$ are well understood (cf. \cite{M01}, \cite{MPV05} and \cite{MPRS02}), and for operators on domains, bounded or unbounded, we refer \cite{DPL04}, \cite{FMP04} and \cite{GHHW05} for further reading. Ornstein-Uhlenbeck semigroups also have rich applications to inequalities in geometry and analysis, such as Brascamp-Lieb inequalities (cf. \cite{B89}, \cite{B91} and \cite{BC-E04}), Gaussian logarithmic Sobolev inequality (cf. \cite{L03}) and the reverse Bobkov isoperimetric inequality (cf. \cite{BC-EF01} and \cite{B97}).

\bigskip
\subsection{Motivation, methodology and arrangements}
In this paper, we shall consider Ornstein-Uhlenbeck operators perturbed by quadratic potentials. Essentially, they take the form 
\begin{equation}
L=-\theta \partial_{x}^{2}+ ax\partial_{x}+\rho x^{2}.\label{L}
\end{equation}
with all the parameters real scalars. The study of Hermite type operators (cf. \cite{CF11} and \cite{F12}) show us that negative potentials give rise to singularities. We are naturally concerned with the following questions.
\begin{itemize}
\item How do singularities arise? 
\item How to characterize the singularities?
\end{itemize}

Similar to Ornstein-Uhlenbeck operator, Kolmogorov operator in \cite{K34} was first proposed to describe the probability density of a free system, and its heat kernel was obtained via Fourier transform. H\"{o}rmander generalised the method to an appreciable extent in the introduction of his celebrated paper \cite{H67}, where heat kernels of Ornstein-Uhlenbeck operators were benefited. However, operator in (\ref{L}) is invariant under Fourier transform, which make those methods inapplicable to find its heat kernel. Here, we adopt Hamiltonian formalism \cite{BGG97} and an ansatz \cite{B99} to reach our goals.

In section \ref{geo}, we use Hamiltonian formalism to find the singularities and explicit geodesics of the operators (\ref{L}). Heat kernels are obtained, with the help of ansatz, in section \ref{kernel}. We stress the critical case for singularities to arise and make more discussions on heat kernel in section \ref{discussion}. 

As typical examples, perturbed Ornstein-Uhlenbeck operators
\begin{equation}
L^{+}=-\partial_{x}^{2}+x\partial_{x}+x^{2}\label{L+}
\end{equation}
and 
\begin{equation}
L^{-}=-\partial_{x}^{2}+x\partial_{x}-x^{2} \label{L-}
\end{equation}
will help readers digest our presentation.

\bigskip
\section{Hamiltonian formalism}\label{geo}

\medskip
\subsection{Hamiltonian systems}
For more generalities, we consider 1-d operators
\begin{equation}
L=-\theta \partial_{x}^{2}+(ax+b)\partial_{x}+\rho x^{2},
\end{equation}
where $\theta >0$, $a, b\in \R$, $\rho \in \R \setminus \{0\}$.
The Hamiltonian of the operator is defined as its full symbol
\begin{equation*}
H=-\theta \xi^{2}+(ax+b)\xi+\rho x^{2},
\end{equation*}
and the associated Hamiltonian system is 
\begin{equation}
\left\{\begin{aligned}
& \dot{x}=\frac{\partial H}{\partial \xi}= ax-2\theta \xi+b\\
& \dot{\xi}= -\frac{\partial H}{\partial x}= -2\rho x-a\xi
\end{aligned}\right..\label{hamilton}
\end{equation}
Equivalently,
\begin{equation*}
\begin{bmatrix}
\dot{x}\\
\dot{\xi}
\end{bmatrix}
=
\begin{bmatrix}
a & -2\theta\\
-2\rho & -a
\end{bmatrix}
\begin{bmatrix}
x\\
\xi
\end{bmatrix}
+
\begin{bmatrix}
b\\
0
\end{bmatrix}.
\end{equation*}
Denoting
\begin{equation*}
X=
\begin{bmatrix}
x\\
\xi
\end{bmatrix},
\end{equation*}
we have
\begin{equation}
\dot{X}=AX+B\label{nonhom}
\end{equation}
with
\begin{align*}
A=
\begin{bmatrix}
a & -2\theta\\
-2\rho & -a
\end{bmatrix},\hspace{20pt}
B=
\begin{bmatrix}
b\\
0
\end{bmatrix}.
\end{align*}

\bigskip
\subsection{Homogeneous equation $\dot{X}=AX$ }
In order to solve the Hamiltonian system, we first consider the fundamental matrix of the system, i.e. solutions of the homogeneous equation 
\begin{equation}
\dot{X}=AX.\label{hom}
\end{equation}
By the characteristic equation of $A$
\begin{equation*}
\left |\lambda \I-A \right|=
\left| \begin{array}{cc}
\lambda-a & 2\theta\\ 
2\rho & \lambda+a
\end{array}\right|=
\lambda^{2}-a^{2}-4\rho\theta,
\end{equation*}
we have two eigenvalues, $\lambda_{1}=\sqrt{a^{2}+4\rho\theta}$ and $\lambda_{2}=-\sqrt{a^{2}+4\rho\theta}$. According to the signs of the quantity $a^{2}+4\rho\theta$, namely positive, zero and negative, we find the fundamental matrix case by case.

\bigskip
\subsubsection*{Case 1 \hspace{15pt}$a^{2}+4\rho\theta>0$.  ($\lambda_{1}>0>\lambda_{2}$)}
Fundamental matrix $\Phi(s)$ has the form $\left[ e^{\lambda_{1}s}\eta_{1},  e^{\lambda_{2}s}\eta_{2}\right]$, where $\eta_{i}$ is an eigenvector corresponding to the eigenvalue $\lambda_{i}$. To find $\eta_{i}$'s, we consider the characteristic equations
\begin{align*}
&\qquad \left(\lambda_{i}\I-A \right)y = 0\\
\\
&\Longleftrightarrow 
\begin{bmatrix}
\lambda_{i}-a & 2\theta\\
2\rho & \lambda_{i}+a
\end{bmatrix}
\begin{bmatrix}
y_{1}\\
y_{2}
\end{bmatrix}
=
\begin{bmatrix}
0\\
0
\end{bmatrix}\\
\\
&\Longleftrightarrow 
\left\{\begin{aligned}
& y_{1}=-\frac{2\theta}{\lambda_{i}-a}y_{2}\\
& y_{2}\in \R\setminus\{0\}
\end{aligned}\right..
\end{align*}
Taking $\eta_{1}=\left[ -\frac{2\theta}{\lambda_{1}-a}, 1\right] ^{t}$, $\eta_{2}=\left[ -\frac{2\theta}{\lambda_{2}-a}, 1\right] ^{t}$, we find a fundamental matrix 
\begin{equation}
\Phi(s)=
\begin{bmatrix}
-\frac{2\theta}{\lambda_{1}-a}e^{\lambda_{1}s} & -\frac{2\theta}{\lambda_{2}-a}e^{\lambda_{2}s}\\
 \\
e^{\lambda_{1}s} & e^{\lambda_{2}s}
\end{bmatrix}.\label{fm1}
\end{equation}
The inverse of the fundamental matrix
\begin{align}
\Phi^{-1}(s)
&=\frac{(\lambda_{1}-a)(\lambda_{2}-a)}{2\theta(\lambda_{1}-\lambda_{2})}
\begin{bmatrix}
e^{\lambda_{2}s} & \frac{2\theta}{\lambda_{2}-a}e^{\lambda_{2}s}\\
 & \\
-e^{\lambda_{1}s} & -\frac{2\theta}{\lambda_{1}-a}e^{\lambda_{1}s}
\end{bmatrix}\\
\notag\\
&=-\frac{\rho}{\lambda_{1}}
\begin{bmatrix}
e^{\lambda_{2}s} & \frac{2\theta}{\lambda_{2}-a}e^{\lambda_{2}s}\\
 & \\
-e^{\lambda_{1}s} & -\frac{2\theta}{\lambda_{1}-a}e^{\lambda_{1}s}
\end{bmatrix}\label{fm-1}
\end{align}
will be used later. 

\bigskip
\subsubsection*{Case 2 \hspace{15pt}$a^{2}+4\rho\theta=0$.  ($\lambda_{1}=\lambda_{2}=0$)}
Noting $A$ is nilpotent of step 2, we may take 
\begin{equation}
\Phi(s)=\exp(sA)=\I + sA=
\begin{bmatrix}
1+as & -2\theta s\\
-2\rho s & 1-as
\end{bmatrix}.\label{fm2}
\end{equation}
Similarly, 
\begin{equation}
\Phi^{-1}(s)=\exp(-sA)=
\begin{bmatrix}
1-as & 2\theta s\\
2\rho s & 1+as
\end{bmatrix}.\label{fm-2}
\end{equation}

\bigskip
\subsubsection*{Case 3 \hspace{15pt}$a^{2}+4\rho\theta<0$.  ($\lambda_{1}, \lambda_{2}\in \imath \R, \hspace{5pt}\imath=\sqrt{-1} $)}
In this case, $\lambda_{1}$ and $\lambda_{2}$ are pure imaginaries. Denoting $\lambda_{0}=\sqrt{-a^{2}-4\rho\theta}$, we write $\lambda_{1}=\imath \lambda_{0} $ and $\lambda_{1}=-\imath \lambda_{0}$. As same as case 1, we have fundamental matrix 
\begin{equation*}
\Psi(s)=
\begin{bmatrix}
-\frac{2\theta}{\lambda_{1}-a}e^{\lambda_{1}s} & -\frac{2\theta}{\lambda_{2}-a}e^{\lambda_{2}s}\\
 & \\
e^{\lambda_{1}s} & e^{\lambda_{2}s}
\end{bmatrix},
\end{equation*}
with 
\begin{equation*}
\Psi(0)=
\begin{bmatrix}
-\frac{2\theta}{\lambda_{1}-a} & -\frac{2\theta}{\lambda_{2}-a}\\
 & \\
1 & 1
\end{bmatrix}
\end{equation*}
and
\begin{equation*}
\Psi^{-1}(0)=\frac{(\lambda_{1}-a)(\lambda_{2}-a)}{2\theta(\lambda_{1}-\lambda_{2})}
\begin{bmatrix}
1 & \frac{2\theta}{\lambda_{2}-a}\\
 & \\
-1 & -\frac{2\theta}{\lambda_{1}-a}
\end{bmatrix}.
\end{equation*}
Given a real system (\ref{hom}), we expect to find a real fundamental matrix via the complex matrix $\Psi(s)$. We take
\begin{align}
\notag \Phi(s)&=\exp(sA)\\
\notag\\
\notag &=\Psi(s)\Psi^{-1}(0)\\
\notag\\
\notag &=\frac{(\lambda_{1}-a)(\lambda_{2}-a)}{2\theta(\lambda_{1}-\lambda_{2})}
\begin{bmatrix}
-\frac{2\theta}{\lambda_{1}-a}e^{\lambda_{1}s} & -\frac{2\theta}{\lambda_{2}-a}e^{\lambda_{2}s}\\
 & \\
e^{\lambda_{1}s} & e^{\lambda_{2}s}
\end{bmatrix}
\begin{bmatrix}
1 & \frac{2\theta}{\lambda_{2}-a}\\
 & \\
-1 & -\frac{2\theta}{\lambda_{1}-a}
\end{bmatrix}\\
\notag\\
&=
\begin{bmatrix}
\cos(\lambda_{0}s)+\frac{a}{\lambda_{0}}\sin(\lambda_{0}s) & -\frac{2\theta}{\lambda_{0}}\sin(\lambda_{0}s)\\
 & \\
-\frac{2\rho}{\lambda_{0}}\sin(\lambda_{0}s) & \cos(\lambda_{0}s)-\frac{a}{\lambda_{0}}\sin(\lambda_{0}s)
\end{bmatrix}.\label{fm3}
\end{align}
A direct computation gives 
\begin{align}
\notag\Phi^{-1}(s)&=\exp(-sA)\\
\notag\\
&=
\begin{bmatrix}
\cos(\lambda_{0}s)-\frac{a}{\lambda_{0}}\sin(\lambda_{0}s) & \frac{2\theta}{\lambda_{0}}\sin(\lambda_{0}s)\\
 & \\
\frac{2\rho}{\lambda_{0}}\sin(\lambda_{0}s) & \cos(\lambda_{0}s)+\frac{a}{\lambda_{0}}\sin(\lambda_{0}s)
\end{bmatrix}.\label{fm-3}
\end{align}

\bigskip
\subsection{Geodesics}
Geodesics of the Hamiltonian system (\ref{hamilton}) is the solutions of 
\begin{equation}
\dot{X}=AX+B\tag{\ref{nonhom}}
\end{equation}
with the boundary condition
\begin{align}
x(0)=x_{0},\hspace{20pt}
x(1)=x.\label{bc}
\end{align}
The solutions of the equation (\ref{nonhom}) has the form 
\begin{equation}
X(s)=\Phi(s)C(s)\label{X}
\end{equation}
where $\Phi(s)$ is a fundamental matrix of the homogeneous equation
\begin{equation}
\dot{X}=AX.\tag{\ref{hom}}
\end{equation}
and $C(s)$ is a column vector satisfying the matrix equation 
\begin{equation*}
\Phi(s)\dot{C}(s)=B,
\end{equation*}
i.e.
\begin{equation}
\dot{C}(s)=\Phi^{-1}(s)B.\label{C}
\end{equation}
We denote $C(s)=\left[ C_{1}(s), C_{2}(s)\right] ^{t}$, and $c=\left[ c_{1}, c_{2}\right] ^{t}$ is a constant vector. We next identify the geodesics case by case. 

\bigskip
\subsubsection*{Case 1 \hspace{15pt}$a^{2}+4\rho\theta>0$.  ($\lambda_{1}>0>\lambda_{2}$)}
By (\ref{fm-1}) and (\ref{C}), we have
\begin{align*}
\begin{bmatrix}
\dot{C_{1}}\\
\dot{C_{2}}
\end{bmatrix}
&=\Phi^{-1}(s)B\\
\\
&=-\frac{\rho}{\lambda_{1}}
\begin{bmatrix}
e^{\lambda_{2}s} & \frac{2\theta}{\lambda_{2}-a}e^{\lambda_{2}s}\\
 & \\
-e^{\lambda_{1}s} & -\frac{2\theta}{\lambda_{1}-a}e^{\lambda_{1}s}
\end{bmatrix}
\begin{bmatrix}
b\\
0
\end{bmatrix}\\
\\
&=-\frac{\rho b}{\lambda_{1}}
\begin{bmatrix}
e^{\lambda_{2}s} \\
-e^{\lambda_{1}s}
\end{bmatrix}.
\end{align*}
Hence, 
\begin{equation*}
\begin{bmatrix}
C_{1}(s)\\
C_{2}(s)
\end{bmatrix}
=
\begin{bmatrix}
-\frac{\rho b}{\lambda_{1}\lambda_{2}}e^{\lambda_{2}s}+c_{1}\\
 \\
\frac{\rho b}{\lambda_{1}^{2}}e^{\lambda_{1}s}+c_{2}
\end{bmatrix}.
\end{equation*}
Making use of (\ref{X}) and (\ref{fm1}), one has
\begin{align*}
\begin{bmatrix}
x(s)\\
\xi(s)
\end{bmatrix}
&=\Phi(s)C(s)\\
\\
&=
\begin{bmatrix}
-\frac{2\theta}{\lambda_{1}-a}e^{\lambda_{1}s} & -\frac{2\theta}{\lambda_{2}-a}e^{\lambda_{2}s}\\
 & \\
e^{\lambda_{1}s} & e^{\lambda_{2}s}
\end{bmatrix}
\begin{bmatrix}
-\frac{\rho b}{\lambda_{1}\lambda_{2}}e^{\lambda_{2}s}+c_{1}\\
 \\
\frac{\rho b}{\lambda_{1}^{2}}e^{\lambda_{1}s}+c_{2}
\end{bmatrix}\\
\\
&=
\begin{bmatrix}
-\frac{2\theta}{\lambda_{1}-a}e^{\lambda_{1}s}\left(-\frac{\rho b}{\lambda_{1}\lambda_{2}}e^{\lambda_{2}s}+c_{1} \right)-\frac{2\theta}{\lambda_{2}-a}e^{\lambda_{2}s}\left(\frac{\rho b}{\lambda_{1}^{2}}e^{\lambda_{1}s}+c_{2} \right) \\
 \\
e^{\lambda_{1}s}\left(-\frac{\rho b}{\lambda_{1}\lambda_{2}}e^{\lambda_{2}s}+c_{1} \right)+e^{\lambda_{2}s}\left(\frac{\rho b}{\lambda_{1}^{2}}e^{\lambda_{1}s}+c_{2} \right)
\end{bmatrix}.
\end{align*}
Thus, 
\begin{align}
\notag 
x(s)&=\left[ \frac{2b\rho\theta}{\lambda_{1}\lambda_{2}\left( \lambda_{1}-a\right) }-\frac{2b\rho\theta}{\lambda_{1}^{2}\left( \lambda_{2}-a\right) }\right] e^{\left( \lambda_{1}+\lambda_{2}\right)s }-\frac{2c_{1}\theta}{\lambda_{1}-a}e^{\lambda_{1}s}-\frac{2c_{2}\theta}{\lambda_{2}-a}e^{\lambda_{2}s}\\
\notag\\
&=-\frac{2ab\rho\theta\left( \lambda_{1}-\lambda_{2}\right) }{\lambda_{1}^{2}\lambda_{2}\left( \lambda_{1}-a\right)\left( \lambda_{2}-a\right)}+d_{1}e^{\lambda_{1}s}+d_{2}e^{\lambda_{2}s} \label{x1}
\end{align}
where $d_{1}=-\frac{2c_{1}\theta}{\lambda_{1}-a}$ and $d_{2}=-\frac{2c_{2}\theta}{\lambda_{2}-a}$. 

\bigskip
Putting $\lambda_{0}=\sqrt{a^{2}+4\rho\theta}$, we have
\begin{align*}
\lambda_{1}=\lambda_{0}, \hspace{15pt} \lambda_{2}=-\lambda_{0}, \hspace{15pt} \frac{2ab\rho\theta\left( \lambda_{1}-\lambda_{2}\right) }{\lambda_{1}^{2}\lambda_{2}\left( \lambda_{1}-a\right)\left( \lambda_{2}-a\right)}=\frac{ab}{\lambda_{0}^{2}}.
\end{align*}
The boundary conditions (\ref{bc}) force $d_{i}$'s to satisfy the linear equations
\begin{equation*}
\begin{bmatrix}
1 & 1\\
e^{\lambda_{1}} & e^{\lambda_{2}}
\end{bmatrix}
\begin{bmatrix}
d_{1}\\
d_{2}
\end{bmatrix}
=
\begin{bmatrix}
x_{0}+\frac{ab}{\lambda_{0}^{2}}\\
 \\
x+\frac{ab}{\lambda_{0}^{2}}
\end{bmatrix}.
\end{equation*}
One substitutes the solution
\begin{equation*}
\begin{bmatrix}
d_{1}\\
d_{2}
\end{bmatrix}
=\frac{1}{e^{-\lambda_{0}}-e^{\lambda_{0}}}
\begin{bmatrix}
e^{-\lambda_{0}}\left( x_{0}+\frac{ab}{\lambda_{0}^{2}}\right)-\left( x+\frac{ab}{\lambda_{0}^{2}}\right)\\
 \\
-e^{\lambda_{0}}\left( x_{0}+\frac{ab}{\lambda_{0}^{2}}\right)+\left( x+\frac{ab}{\lambda_{0}^{2}}\right) 
\end{bmatrix}
\end{equation*}
into (\ref{x1}) and finally obtains the explicit geodesics
\begin{equation}
x(s) =-\frac{ab}{\lambda_{0}^{2}}+\left( x_{0}+\frac{ab}{\lambda_{0}^{2}}\right)\frac{\sinh \left( \lambda_{0}(1-s)\right) }{\sinh \lambda_{0}} +\left( x+\frac{ab}{\lambda_{0}^{2}}\right)\frac{\sinh \left( \lambda_{0}s\right) }{\sinh \lambda_{0}}, \hspace{5pt} 0 \leq s \leq 1.\label{g1}
\end{equation}

\bigskip
\begin{rmk}\label{rmk1}
It is apparent that $\rho >0$ ensures $a^{2}+4\rho\theta >0$, and hence there exists unique geodesics connecting two given points. This fact coincides well with Hermitian case, where positive potentials keep the geodesics regular.  However, $\rho <0$ do not necessarily  produce singularities in our case, as long as $a^{2}+4\rho\theta>0$. 
\end{rmk}

\bigskip
\subsubsection*{Case 2 \hspace{15pt}$a^{2}+4\rho\theta=0$.  ($\lambda_{1}=\lambda_{2}=0$)}
By (\ref{fm-2}) and (\ref{C}), we have 
\begin{align*}
&\qquad \dot{C}=\Phi^{-1}(s)B\\
\\
&\Longleftrightarrow
\begin{bmatrix}
\dot{C_{1}}\\
\dot{C_{2}}
\end{bmatrix}
=
\begin{bmatrix}
1-as & 2\theta s\\
2\rho s & 1+as
\end{bmatrix}
\begin{bmatrix}
b\\
0
\end{bmatrix}\\
\\
&\Longleftrightarrow
\begin{bmatrix}
C_{1}(s)\\
C_{2}(s)
\end{bmatrix}
=
\begin{bmatrix}
bs-\frac{ab}{2}s^{2}+c_{1}\\
 \\
\rho b s^{2}+c_{2}
\end{bmatrix}.
\end{align*}
Hence, by (\ref{X}) and (\ref{fm2}), one has
\begin{align*}
\begin{bmatrix}
x(s)\\
\xi(s)
\end{bmatrix}
&=\Phi(s)C(s)\\
\\
&=
\begin{bmatrix}
1+as & -2\theta s\\
-2\rho s & 1-as
\end{bmatrix}
\begin{bmatrix}
bs-\frac{ab}{2}s^{2}+c_{1}\\
 \\
\rho b s^{2}+c_{2}
\end{bmatrix}\\
\\
&=
\begin{bmatrix}
\left(1+as \right) \left( bs-\frac{ab}{2}s^{2}+c_{1}\right)-2\theta s\left( \rho b s^{2}+c_{2}\right) \\
 \\
-2\rho s \left( bs-\frac{ab}{2}s^{2}+c_{1}\right)+\left( 1-as\right) \left( \rho b s^{2}+c_{2}\right) 
\end{bmatrix}.
\end{align*}
Noting that $a^{2}+4\rho\theta=0$, one gets
\begin{equation}
x(s)=\frac{ab}{2}s^{2}+d_{2}s+d_{1}, \label{x2}
\end{equation}
where $d_{1}=c_{1}$ and $d_{2}=b+ac_{1}-2\theta c_{2}$ satisfy the linear equations implied by the boundary conditions (\ref{bc})
\begin{align*}
\left\{\begin{aligned}
& d_{1}=x_{0}\\
& d_{1}+d_{2}=x-\frac{ab}{2}
\end{aligned}\right.
\hspace{15pt} \Longrightarrow \hspace{15pt}
\left\{\begin{aligned}
& d_{1}=x_{0}\\
& d_{2}=x-x_{0}-\frac{ab}{2}
\end{aligned}\right..
\end{align*}
After substituting $d_{i}$'s into (\ref{x2}), we finally obtain the geodesics 
\begin{equation}
x(s)=\frac{ab}{2}s^{2}+\left( x-x_{0}-\frac{ab}{2}\right) s+x_{0}, \hspace{15pt} 0 \leq s \leq 1. \label{g2}
\end{equation}

\bigskip
\subsubsection*{Case 3 \hspace{15pt}$a^{2}+4\rho\theta<0$.  ($\lambda_{1}, \lambda_{2}\in \imath \R, \hspace{5pt}\imath=\sqrt{-1} $)}
By (\ref{fm-3}) and (\ref{C}), we have 
\begin{align*}
&\qquad \dot{C}=\Phi^{-1}(s)B\\
\\
&\Longleftrightarrow
\begin{bmatrix}
\dot{C_{1}}\\
\dot{C_{2}}
\end{bmatrix}
=
\begin{bmatrix}
\cos(\lambda_{0}s)-\frac{a}{\lambda_{0}}\sin(\lambda_{0}s) & \frac{2\theta}{\lambda_{0}}\sin(\lambda_{0}s)\\
 & \\
\frac{2\rho}{\lambda_{0}}\sin(\lambda_{0}s) & \cos(\lambda_{0}s)+\frac{a}{\lambda_{0}}\sin(\lambda_{0}s)
\end{bmatrix}
\begin{bmatrix}
b\\
0
\end{bmatrix}\\
\\
&\Longleftrightarrow
\begin{bmatrix}
C_{1}(s)\\
C_{2}(s)
\end{bmatrix}
=
\begin{bmatrix}
\frac{b}{\lambda_{0}}\sin\left(\lambda_{0}s \right) +\frac{ab}{\lambda_{0}^{2}}\cos\left(\lambda_{0}s \right)+c_{1}\\
 \\
-\frac{2\rho b}{\lambda_{0}^{2}}\cos\left(\lambda_{0}s \right)+c_{2}
\end{bmatrix}.
\end{align*}
Hence, by (\ref{X}) and (\ref{fm3}), one has
\begin{align*}
\begin{bmatrix}
x(s)\\
\xi(s)
\end{bmatrix}
&=\Phi(s)C(s)\\
\\
&=
\begin{bmatrix}
\cos(\lambda_{0}s)+\frac{a}{\lambda_{0}}\sin(\lambda_{0}s) & -\frac{2\theta}{\lambda_{0}}\sin(\lambda_{0}s)\\
 & \\
-\frac{2\rho}{\lambda_{0}}\sin(\lambda_{0}s) & \cos(\lambda_{0}s)-\frac{a}{\lambda_{0}}\sin(\lambda_{0}s)
\end{bmatrix}\\
&\quad \cdot
\begin{bmatrix}
\frac{b}{\lambda_{0}}\sin\left(\lambda_{0}s \right) +\frac{ab}{\lambda_{0}^{2}}\cos\left(\lambda_{0}s \right)+c_{1}\\
 \\
-\frac{2\rho b}{\lambda_{0}^{2}}\cos\left(\lambda_{0}s \right)+c_{2}
\end{bmatrix},
\end{align*}
and an arrangement gives 
\begin{align}
\notag x(s)&= \left(\cos(\lambda_{0}s)+\frac{a}{\lambda_{0}}\sin(\lambda_{0}s)\right) \left( \frac{b}{\lambda_{0}}\sin\left(\lambda_{0}s \right) +\frac{ab}{\lambda_{0}^{2}}\cos\left(\lambda_{0}s \right)+c_{1}\right)\\ 
\notag &\quad
-\frac{2\theta}{\lambda_{0}}\sin(\lambda_{0}s)\left(-\frac{2\rho b}{\lambda_{0}^{2}}\cos\left(\lambda_{0}s \right)+c_{2} \right) \\
\notag\\
&=\frac{ab}{\lambda_{0}^{2}}+\left(\cos(\lambda_{0}s)+\frac{a}{\lambda_{0}}\sin(\lambda_{0}s)\right)c_{1}-\frac{2\theta \sin\left( \lambda_{0}s\right) }{\lambda_{0}}c_{2} \label{x3}
\end{align}
with $c_{i}$'s fulfilling the boundary conditions (\ref{bc})
\begin{equation}
\begin{bmatrix}
1 & 0\\
 \\
\cos\lambda_{0}+\frac{a}{\lambda_{0}}\sin\lambda_{0} & -\frac{2\theta \sin\lambda_{0}}{\lambda_{0}}
\end{bmatrix}
\begin{bmatrix}
c_{1}\\
c_{2}
\end{bmatrix}
=
\begin{bmatrix}
x_{0}-\frac{ab}{\lambda_{0}^{2}}\\
 \\
x-\frac{ab}{\lambda_{0}^{2}}
\end{bmatrix}.\label{singular Coefficient}
\end{equation}

\bigskip
$\bullet$ \hspace{5pt} $\sin\lambda_{0}\neq 0$, $\left\lbrace c_{i} \right\rbrace_{i=1}^{2} $ have a unique solution.

\bigskip
Indeed, 
\begin{align*}
\begin{bmatrix}
c_{1}\\
c_{2}
\end{bmatrix}
&=
\begin{bmatrix}
1 & 0\\
 \\
\cos\lambda_{0}+\frac{a}{\lambda_{0}}\sin\lambda_{0} & -\frac{2\theta \sin\lambda_{0}}{\lambda_{0}}
\end{bmatrix}^{-1}
\begin{bmatrix}
x_{0}-\frac{ab}{\lambda_{0}^{2}}\\
 \\
x-\frac{ab}{\lambda_{0}^{2}}
\end{bmatrix}\\
 \\
&=
\begin{bmatrix}
x_{0}-\frac{ab}{\lambda_{0}^{2}}\\
 \\
\frac{\lambda_{0}}{2\theta \sin\lambda_{0}}\left( \cos\lambda_{0}+\frac{a}{\lambda_{0}}\sin\lambda_{0}\right)\left(x_{0}-\frac{ab}{\lambda_{0}^{2}} \right)-\frac{\lambda_{0}}{2\theta \sin\lambda_{0}}\left(x-\frac{ab}{\lambda_{0}^{2}} \right) 
\end{bmatrix}.
\end{align*}
Substituting $c_{i}$'s back to (\ref{x3}) and making an arrangement, one formulates the geodesics
\begin{equation}
x(s) =\frac{ab}{\lambda_{0}^{2}}+\left( x_{0}-\frac{ab}{\lambda_{0}^{2}}\right)\frac{\sin \left( \lambda_{0}(1-s)\right) }{\sin \lambda_{0}} +\left( x-\frac{ab}{\lambda_{0}^{2}}\right)\frac{\sin \left( \lambda_{0}s\right) }{\sin \lambda_{0}}, \hspace{15pt} 0 \leq s \leq 1.\label{g3.1}
\end{equation}

\bigskip
$\bullet$ \hspace{5pt} $\sin\lambda_{0}= 0$, $\left\lbrace c_{i} \right\rbrace_{i=1}^{2} $ have no solution or uncountably many solutions.

\bigskip
Noting $\lambda_{0}=k\pi$ and the coefficient matrices of  equations (\ref{singular Coefficient})
\begin{equation*}
\begin{bmatrix}
1 & 0 & \vdots & x_{0}-\frac{ab}{\lambda_{0}^{2}}\\
 & & \vdots & \\
\cos\lambda_{0}+\frac{a}{\lambda_{0}}\sin\lambda_{0} & -\frac{2\theta \sin\lambda_{0}}{\lambda_{0}} & \vdots & x-\frac{ab}{\lambda_{0}^{2}}
\end{bmatrix}
\, = \,
\begin{bmatrix}
1 & 0 & \vdots & x_{0}-\frac{ab}{\lambda_{0}^{2}}\\
 & & \vdots & \\
(-1)^{k} & 0 & \vdots & x-\frac{ab}{\lambda_{0}^{2}}
\end{bmatrix},
\end{equation*}
one concludes that 
\begin{align*}
&\qquad  \left\lbrace c_{i} \right\rbrace_{i=1}^{2} \mbox{are solvable}\\
&\Longleftrightarrow x-\frac{ab}{\lambda_{0}^{2}}=(-1)^{k}\left(x_{0}-\frac{ab}{\lambda_{0}^{2}} \right) \\
&\Longleftrightarrow x-\frac{ab}{k^{2}\pi^{2}}=(-1)^{k}\left(x_{0}-\frac{ab}{k^{2}\pi^{2}} \right) \\
&\Longleftrightarrow
\left\{\begin{aligned}
& x=x_{0}, \hspace{15pt} \mbox{for $k$ even}\\
& x=\frac{2ab}{k^{2}\pi^{2}}-x_{0}, \hspace{15pt} \mbox{for $k$ odd}
\end{aligned}\right.,
\end{align*}
and the solutions are 
\begin{align*}
\left\{\begin{aligned}
& c_{1}=x_{0}-\frac{ab}{k^{2}\pi^{2}}\\
& c_{2}\in \R
\end{aligned}\right..
\end{align*}
By (\ref{x3}), the geodesics are
\begin{equation}
x(s)=\frac{ab}{\lambda_{0}^{2}}+\left(x_{0}-\frac{ab}{k^{2}\pi^{2}} \right)\left(\cos(\lambda_{0}s)+\frac{a}{\lambda_{0}}\sin(\lambda_{0}s)\right)-c_{2}\sin\left( \lambda_{0}s\right), \label{g3.2}
\end{equation}
where $c_{2}$ is an arbitrary real scalar, and the parameter $0 \leq s \leq 1$.

\bigskip
\begin{rmk}\label{rmk3}
Condition $\sin\lambda_{0}= 0$ means that $s=1$ is a singular regime, which corresponds to a hyperplane in the 2-d space-time coordinate $(x,s)$. A detailed discussion for more singular cases has been made in \cite{CF11} and \cite{F12}. We do not pursue this point here.
\end{rmk}

\bigskip
Finally, we summarise the explicit characterization of the geodesics as the following 
\begin{thm}\label{complete geodesics}
Suppose that $\theta >0$, $a, b\in \R$, $\rho \in \R \setminus \{0\}$. The geodesics of the perturbed Ornstein-Uhlenbeck operators $L=-\theta \partial_{x}^{2}+(ax+b)\partial_{x}+\rho x^{2}$ with the form $\left\lbrace x(s) | 0 \leq s \leq 1, \: x(0)=x_{0}, \: x(1)=x\right\rbrace $ are given by

\begin{enumerate}
\item $a^{2}+4\rho\theta > 0$.  $\left( \lambda_{0}:=\sqrt{a^{2}+4\rho\theta}\right)$ 
\begin{equation}
x(s) =-\frac{ab}{\lambda_{0}^{2}}+\left( x_{0}+\frac{ab}{\lambda_{0}^{2}}\right)\frac{\sinh \left( \lambda_{0}(1-s)\right) }{\sinh \lambda_{0}} +\left( x+\frac{ab}{\lambda_{0}^{2}}\right)\frac{\sinh \left( \lambda_{0}s\right) }{\sinh \lambda_{0}}. \tag{\ref{g1}}
\end{equation}

\bigskip
\item $a^{2}+4\rho\theta = 0$.
\begin{equation}
x(s)=\frac{ab}{2}s^{2}+\left( x-x_{0}-\frac{ab}{2}\right) s+x_{0}. \tag{\ref{g2}}
\end{equation}

\bigskip
\item $a^{2}+4\rho\theta < 0$.  $\left( \lambda_{0}:=\sqrt{-a^{2}-4\rho\theta}\right)$ 
      \begin{itemize}
      \item $\lambda_{0} \neq k\pi$ ($k \in \Z^{+}$)
      \begin{equation}
      x(s) =\frac{ab}{\lambda_{0}^{2}}+\left( x_{0}-\frac{ab}{\lambda_{0}^{2}}\right)\frac{\sin \left( \lambda_{0}(1-s)\right) }{\sin \lambda_{0}} +\left( x-\frac{ab}{\lambda_{0}^{2}}\right)\frac{\sin \left( \lambda_{0}s\right) }{\sin \lambda_{0}}. \tag{\ref{g3.1}}
      \end{equation}
      
      \bigskip
      \item $\lambda_{0} = k\pi$ ($k \in \Z^{+}$) and $x=\frac{ab}{k^{2}\pi^{2}}+(-1)^{k}\left(x_{0}-\frac{ab}{k^{2}\pi^{2}}\right)$
      \begin{equation}
      x(s)=\frac{ab}{\lambda_{0}^{2}}+\left(x_{0}-\frac{ab}{k^{2}\pi^{2}} \right)\left(\cos(\lambda_{0}s)+\frac{a}{\lambda_{0}}\sin(\lambda_{0}s)\right)-c_{2}\sin\left( \lambda_{0}s\right) \tag{\ref{g3.2}}
      \end{equation}
      with $c_{2}$ an arbitrary real scalar.
      \end{itemize}
\end{enumerate}
\end{thm}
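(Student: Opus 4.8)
The plan is to derive all three formulae directly from the variation-of-parameters representation set up above: every solution of the inhomogeneous system \eqref{nonhom} has the form $X(s)=\Phi(s)C(s)$ with $\dot{C}(s)=\Phi^{-1}(s)B$, and the two-point data \eqref{bc} then fix the free constants. Since the discriminant $a^{2}+4\rho\theta$ controls the spectrum of $A$, I would split into the same three cases as in the preceding subsections and reuse the explicit fundamental matrices already computed: \eqref{fm1}--\eqref{fm-1} when $a^{2}+4\rho\theta>0$, \eqref{fm2}--\eqref{fm-2} when it vanishes, and \eqref{fm3}--\eqref{fm-3} when it is negative.

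In Case 1, because $B=[b,0]^{t}$, the equation $\dot{C}=\Phi^{-1}(s)B$ integrates elementarily to give $C(s)$ up to a constant vector $c$; substituting into $X=\Phi C$ and collecting the exponentials yields $x(s)=-ab/\lambda_{0}^{2}+d_{1}e^{\lambda_{0}s}+d_{2}e^{-\lambda_{0}s}$, where the particular-solution constant is simplified using $\lambda_{1}=\lambda_{0}$, $\lambda_{2}=-\lambda_{0}$ and $\lambda_{1}\lambda_{2}=-4\rho\theta$. The conditions \eqref{bc} then become a $2\times2$ system for $(d_{1},d_{2})$ with determinant $e^{-\lambda_{0}}-e^{\lambda_{0}}\neq0$; solving it and re-expressing $e^{\pm\lambda_{0}s}$ through $\sinh$ produces \eqref{g1}. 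Case 2 goes the same way, except $A$ is now nilpotent with $\Phi(s)=\I+sA$, so $C(s)$ is quadratic in $s$ and, after using $4\rho\theta=-a^{2}$, $x(s)$ collapses to the quadratic polynomial \eqref{x2}; here \eqref{bc} gives a triangular, hence uniquely solvable, system, yielding \eqref{g2}.

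Case 3 uses the real fundamental matrix \eqref{fm3}: the identical computation gives $x(s)=ab/\lambda_{0}^{2}+\bigl(\cos(\lambda_{0}s)+\tfrac{a}{\lambda_{0}}\sin(\lambda_{0}s)\bigr)c_{1}-\tfrac{2\theta}{\lambda_{0}}\sin(\lambda_{0}s)\,c_{2}$, and \eqref{bc} translates into the linear system \eqref{singular Coefficient}, whose coefficient determinant equals $-\tfrac{2\theta}{\lambda_{0}}\sin\lambda_{0}$. When $\sin\lambda_{0}\neq0$ this is invertible, and back-substitution together with the identity $\sin(\lambda_{0}(1-s))=\sin\lambda_{0}\cos(\lambda_{0}s)-\cos\lambda_{0}\sin(\lambda_{0}s)$ gives \eqref{g3.1}. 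The delicate point---and the only genuine obstacle---is $\sin\lambda_{0}=0$, i.e. $\lambda_{0}=k\pi$: then \eqref{singular Coefficient} is rank-deficient, so I would apply the Rouch\'e--Capelli criterion to its augmented matrix, which after substitution has second row $[(-1)^{k},\,0\mid x-ab/\lambda_{0}^{2}]$, and conclude that a solution exists exactly under the stated compatibility relation between $x$ and $x_{0}$; in that case $c_{1}$ is determined while $c_{2}$ remains free, and feeding this back into the formula for $x(s)$ produces the one-parameter family \eqref{g3.2}. Assembling the three cases completes the proof.
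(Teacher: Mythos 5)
Your plan reproduces the paper's own argument essentially verbatim: variation of parameters $X(s)=\Phi(s)C(s)$ with the explicit fundamental matrices \eqref{fm1}, \eqref{fm2}, \eqref{fm3}, integration of $\dot{C}=\Phi^{-1}(s)B$, imposition of the boundary data \eqref{bc} to pin down the free constants, and a rank/consistency analysis of \eqref{singular Coefficient} in the degenerate subcase $\lambda_{0}=k\pi$. The only slip is cosmetic: in Case~1 the identity you invoke should be $(\lambda_{1}-a)(\lambda_{2}-a)=-4\rho\theta$ rather than $\lambda_{1}\lambda_{2}=-4\rho\theta$ (in fact $\lambda_{1}\lambda_{2}=-(a^{2}+4\rho\theta)$), though this does not affect the resulting constant $-ab/\lambda_{0}^{2}$.
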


\bigskip
In particular, we obtain the geodesics of $L^{\pm}$ in (\ref{L+}) and (\ref{L-}).

\begin{cor}
There exist unique geodesics for the perturbed Ornstein-Uhlenbeck operator $L^{+}=-\partial_{x}^{2}+x\partial_{x}+x^{2}$ fulfilling the boundary conditions $x(0)=x_{0}, \, x(1)=x$ with
\begin{equation}
x(s)=\frac{\sinh(\sqrt{5}(1-s))}{\sinh\sqrt{5}}x_{0}+\frac{\sinh(\sqrt{5}s)}{\sinh\sqrt{5}}x, \hspace{15pt} 0 \leq s \leq 1.
\end{equation}

\bigskip
\noindent
There exist unique geodesics for the perturbed Ornstein-Uhlenbeck operator $L^{-}=-\partial_{x}^{2}+x\partial_{x}-x^{2}$ fulfilling the boundary conditions $x(0)=x_{0}, \, x(1)=x$ with
\begin{equation}
x(s)=\frac{\sin(\sqrt{3}(1-s))}{\sin\sqrt{3}}x_{0}+\frac{\sin(\sqrt{3}s)}{\sin\sqrt{3}}x, \hspace{15pt} 0 \leq s \leq 1.
\end{equation}
\end{cor}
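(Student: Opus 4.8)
The plan is simply to match each of $L^{+}$ and $L^{-}$ against the general operator $L=-\theta\partial_{x}^{2}+(ax+b)\partial_{x}+\rho x^{2}$ treated in Theorem \ref{complete geodesics}, determine which of the three regimes applies by computing the discriminant $a^{2}+4\rho\theta$, and then specialise the corresponding closed formula. No new computation beyond substitution and a sign check is needed.

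For $L^{+}=-\partial_{x}^{2}+x\partial_{x}+x^{2}$ I would read off $\theta=1$, $a=1$, $b=0$, $\rho=1$, so that $a^{2}+4\rho\theta=1+4=5>0$. Hence we are in Case 1 of Theorem \ref{complete geodesics} with $\lambda_{0}=\sqrt{a^{2}+4\rho\theta}=\sqrt{5}$; uniqueness of the geodesic joining $x_{0}$ to $x$ is guaranteed because $a^{2}+4\rho\theta>0$ forces $\sinh\lambda_{0}=\sinh\sqrt{5}\neq 0$ (cf. Remark \ref{rmk1}). Substituting these values into formula (\ref{g1}) and noting that $b=0$ kills every term containing the factor $ab/\lambda_{0}^{2}$, the geodesic collapses to
\begin{equation*}
x(s)=\frac{\sinh\bigl(\sqrt{5}(1-s)\bigr)}{\sinh\sqrt{5}}\,x_{0}+\frac{\sinh\bigl(\sqrt{5}s\bigr)}{\sinh\sqrt{5}}\,x,\qquad 0\leq s\leq 1,
\end{equation*}
which is the claimed expression.

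For $L^{-}=-\partial_{x}^{2}+x\partial_{x}-x^{2}$ the identification is $\theta=1$, $a=1$, $b=0$, $\rho=-1$, giving $a^{2}+4\rho\theta=1-4=-3<0$, so we fall under Case 3 with $\lambda_{0}=\sqrt{-a^{2}-4\rho\theta}=\sqrt{3}$. The only point requiring a moment's attention — and the nearest thing to an obstacle — is verifying that we are in the non-degenerate sub-case $\lambda_{0}\neq k\pi$, i.e. $\sqrt{3}\notin\{k\pi:k\in\Z^{+}\}$; this is immediate since $0<\sqrt{3}<\pi$, and in particular $\sin\sqrt{3}\neq 0$, so the coefficient system (\ref{singular Coefficient}) has a unique solution and the geodesic is unique. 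Then formula (\ref{g3.1}), again with all $ab/\lambda_{0}^{2}$ terms vanishing because $b=0$, yields
\begin{equation*}
x(s)=\frac{\sin\bigl(\sqrt{3}(1-s)\bigr)}{\sin\sqrt{3}}\,x_{0}+\frac{\sin\bigl(\sqrt{3}s\bigr)}{\sin\sqrt{3}}\,x,\qquad 0\leq s\leq 1,
\end{equation*}
completing the proof. I expect the whole argument to be essentially a one-line invocation of Theorem \ref{complete geodesics} in each case, with the only content being the arithmetic of the discriminant and the elementary observation $\sqrt{3}<\pi$.
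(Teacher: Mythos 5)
Your proposal is correct and matches the paper's route exactly: the corollary is obtained by specialising Theorem \ref{complete geodesics} with $\theta=1$, $a=1$, $b=0$ and $\rho=\pm 1$, placing $L^{+}$ in Case 1 with $\lambda_{0}=\sqrt{5}$ and $L^{-}$ in the non-degenerate sub-case of Case 3 with $\lambda_{0}=\sqrt{3}<\pi$. The check that $\sqrt{3}\neq k\pi$ is the only point of substance, and you handle it.
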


\bigskip
\section{Heat kernels}\label{kernel}

\bigskip
\subsection{A probabilistic ansatz}
We adopt a probabilistic ansatz to handle the heat kernel. It was first used in \cite{B99} for Kolmogorov operators, and recently for operators of Hermite type \cite{F12}. The perturbed Ornstein-Uhlenbeck operators
\begin{equation*}
L=-\theta \partial_{x}^{2}+(ax+b)\partial_{x}+\rho x^{2}
\end{equation*}
have heat kernels of the form
\begin{equation}
P(t; x_{0}, x)=\varphi(t)\exp\left\lbrace \alpha(t)x^{2}+\beta(t)xx_{0}+\gamma(t)x_{0}^{2}+\mu(t)x+\nu(t)x_{0}\right\rbrace \label{ansatz}
\end{equation}
where the coefficients $\varphi$, $\alpha$, $\beta$, $\gamma$, $\mu$, $\nu$ are functions of the time variable $t$ and satisfy the following systems of ordinary differential equations  
\begin{align}
&\dot{\alpha} = 4\theta \alpha^{2}-2a\alpha-\rho \label{eq1}\\
&\dot{\beta} = 4\theta \alpha \beta-a\beta \label{eq2}\\
&\dot{\gamma} = \theta \beta^{2} \label{eq3}\\
&\dot{\mu} = 4\theta \alpha \mu-a\mu-2b\alpha \label{eq4}\\
&\dot{\nu} = 2\theta \beta \mu-b\beta \label{eq5}\\
&\varphi^{-1}\dot{\varphi} = \theta \mu^{2}+ 2\theta \alpha-b\mu. \label{eq6}
\end{align}

\bigskip
\subsection{Identification of coefficients}
We now solve the equations (\ref{eq1})-(\ref{eq6}) and identify the free constant in each equation by comparing the coefficient with that in Hermitian case \cite{F12}, i.e. $a=0$. Without confusion, we denote the free constant by $C$ which may be different from time to time.

\bigskip
\subsubsection*{Case 1 \hspace{15pt}$a^{2}+4\rho\theta>0$.}

\bigskip
Equation (\ref{eq1}): Putting $\alpha_{1}=\frac{a+\sqrt{a^{2}+4\rho\theta}}{4\theta}$ and $\alpha_{2}=\frac{a-\sqrt{a^{2}+4\rho\theta}}{4\theta}$, one has 
\begin{align*}
&\qquad \dot{\alpha}=4\theta(\alpha-\alpha_{1})(\alpha-\alpha_{2})\\
\\
&\Longrightarrow \ln \left|\frac{\alpha-\alpha_{1}}{\alpha-\alpha_{2}}\right|=4\theta(\alpha_{1}-\alpha_{2})+C\\
\\
&\Longrightarrow  \alpha=\frac{a}{4\theta}+\frac{\sqrt{a^{2}+4\rho\theta}}{4\theta}\frac{1+Ce^{2\sqrt{a^{2}+4\rho\theta}}}{1-Ce^{2\sqrt{a^{2}+4\rho\theta}}}.
\end{align*}
Let $C=1$, one gets
\begin{equation}
\alpha(t)=\frac{a}{4\theta}-\frac{\sqrt{a^{2}+4\rho\theta}}{4\theta}\coth\left(\sqrt{a^{2}+4\rho\theta}\,t \right).\label{heat1.1}
\end{equation}

\bigskip
Equation (\ref{eq2}):
\begin{align*}
&\qquad \beta^{-1}\dot{\beta}=-\sqrt{a^{2}+4\rho\theta}\coth\left(\sqrt{a^{2}+4\rho\theta}\,t \right)\\
\\
&\Longrightarrow \ln \left|\beta\right|=-\ln\left(\sinh\left(\sqrt{a^{2}+4\rho\theta}\,t \right)  \right) +C\\
\\
&\Longrightarrow  \beta=\frac{C}{\sinh\left(\sqrt{a^{2}+4\rho\theta}\,t \right)}.
\end{align*}
Let $C=\frac{\sqrt{a^{2}+4\rho\theta}}{2\theta}$, one gets
\begin{equation}
\beta(t)=\frac{\sqrt{a^{2}+4\rho\theta}}{2\theta \sinh\left(\sqrt{a^{2}+4\rho\theta}\,t \right)}.
\end{equation}

\bigskip
Equation (\ref{eq3}):
\begin{align*}
&\qquad \dot{\gamma}=\frac{\left(\sqrt{a^{2}+4\rho\theta}\right) ^{2} }{4\theta\sinh^{2}\left(\sqrt{a^{2}+4\rho\theta}\,t \right)}\\
\\
&\Longrightarrow \gamma =-\frac{\sqrt{a^{2}+4\rho\theta}}{4\theta}\coth\left(\sqrt{a^{2}+4\rho\theta}\,t \right) +C.
\end{align*}
Let $C=\frac{a}{4\theta}$, one gets
\begin{equation}
\gamma(t)=\frac{a}{4\theta}-\frac{\sqrt{a^{2}+4\rho\theta}}{4\theta}\coth\left(\sqrt{a^{2}+4\rho\theta}\,t \right).
\end{equation}

\bigskip
Equation (\ref{eq4}): $\mu$ has the form
\begin{equation*}
\mu(t)=\frac{C(t)}{\sinh\left(\sqrt{a^{2}+4\rho\theta}\,t \right)},
\end{equation*}
where $C(t)$ satisfies
\begin{equation*}
\frac{\dot{C}}{\sinh\left(\sqrt{a^{2}+4\rho\theta}\,t \right)}=-2b\alpha(t).
\end{equation*}
By (\ref{heat1.1}), an integration yields
\begin{equation*}
C(t) =-2b\left(\frac{a\cosh\left(\sqrt{a^{2}+4\rho\theta}\,t \right)}{4\theta\sqrt{a^{2}+4\rho\theta}}-\frac{\sinh\left(\sqrt{a^{2}+4\rho\theta}\,t \right)}{4\theta}\right).
\end{equation*}
Hence,
\begin{equation}
\mu(t)=-\frac{ab}{2\theta\sqrt{a^{2}+4\rho\theta}}\coth\left(\sqrt{a^{2}+4\rho\theta}\,t \right)+\frac{b}{2\theta}.\label{heat1.4}
\end{equation}

\bigskip
Equation (\ref{eq5}):
\begin{align*}
&\qquad \dot{\nu}=-\frac{ab}{2\theta}\frac{\cosh\left(\sqrt{a^{2}+4\rho\theta}\,t \right)}{\sinh^{2}\left(\sqrt{a^{2}+4\rho\theta}\,t \right)}\\
\\
&\Longrightarrow \nu =\frac{ab}{2\theta\sqrt{a^{2}+4\rho\theta}\sinh\left(\sqrt{a^{2}+4\rho\theta}\,t \right)} +C.
\end{align*}
Let $C=\frac{b}{2\theta}$, one gets
\begin{equation}
\nu(t)=\frac{ab}{2\theta\sqrt{a^{2}+4\rho\theta}\sinh\left(\sqrt{a^{2}+4\rho\theta}\,t \right)}+\frac{b}{2\theta}.
\end{equation}

\bigskip
Equation (\ref{eq6}): By (\ref{heat1.1}) and (\ref{heat1.4}), an arrangement of right hand side of (\ref{eq6}) gets
\begin{align*}
\varphi^{-1}\dot{\varphi}&=\frac{a^{2}b^{2}}{4\theta(a^{2}+4\rho\theta)\sinh^{2}\left(\sqrt{a^{2}+4\rho\theta}\,t \right)}-\frac{\sqrt{a^{2}+4\rho\theta}}{2}\coth\left(\sqrt{a^{2}+4\rho\theta}\,t \right)\\
&\quad +\left(\frac{a}{2}-\frac{\rho b^{2}}{a^{2}+4\rho\theta} \right).
\end{align*}
Hence,
\begin{align*}
\varphi &=\frac{C}{\sinh^{1/2}\left(\sqrt{a^{2}+4\rho\theta}\,t \right)}\\
&\quad \cdot\exp\left\lbrace \left(\frac{a}{2}-\frac{\rho b^{2}}{a^{2}+4\rho\theta} \right)t-\frac{a^{2}b^{2}}{4\theta(a^{2}+4\rho\theta)^{3/2}}\coth\left(\sqrt{a^{2}+4\rho\theta}\,t \right)\right\rbrace.
\end{align*}
Let $C=\left(\frac{\sqrt{a^{2}+4\rho\theta}}{4 \pi \theta} \right) ^{1/2}$, one gets
\begin{align}
\notag
\varphi(t) &=\left(\frac{\sqrt{a^{2}+4\rho\theta}}{4\pi\theta\sinh\left(\sqrt{a^{2}+4\rho\theta}\,t \right)} \right) ^{1/2}\\
&\quad \cdot\exp\left\lbrace \left(\frac{a}{2}-\frac{\rho b^{2}}{a^{2}+4\rho\theta} \right)t-\frac{a^{2}b^{2}}{4\theta(a^{2}+4\rho\theta)^{3/2}}\coth\left(\sqrt{a^{2}+4\rho\theta}\,t \right)\right\rbrace.
\end{align}

\bigskip
\subsubsection*{Case 2 \hspace{15pt}$a^{2}+4\rho\theta=0$.}

\bigskip
Equation (\ref{eq1}): 
\begin{align*}
&\qquad \dot{\alpha}=4\theta\left( \alpha-\frac{a}{4\theta}\right)^{2}\\
\\
&\Longrightarrow \frac{1}{ \alpha-\frac{a}{4\theta}}=-4\theta \,t+C.
\end{align*}
Let $C=0$, one gets
\begin{equation}
\alpha(t)=\frac{a}{4\theta}-\frac{1}{4\theta \,t}.\label{heat2.1}
\end{equation}

\bigskip
Equation (\ref{eq2}):
\begin{align*}
&\qquad \beta^{-1}\dot{\beta}=-\frac{1}{t}\\
\\
&\Longrightarrow \beta=\frac{C}{t}.
\end{align*}
Let $C=\frac{1}{2\theta}$, one gets
\begin{equation}
\beta(t)=\frac{1}{2\theta \,t}.
\end{equation}

\bigskip
Equation (\ref{eq3}):
\begin{align*}
&\qquad \dot{\gamma}=\frac{1}{4\theta \,t^{2}}\\
\\
&\Longrightarrow \gamma=-\frac{1}{4\theta \,t}+C.
\end{align*}
Let $C=\frac{a}{4\theta}$, one gets
\begin{equation}
\gamma(t)=\frac{a}{4\theta}-\frac{1}{4\theta \,t}.
\end{equation}

\bigskip
Equation (\ref{eq4}): $\mu$ has the form
\begin{equation*}
\mu(t)=\frac{C(t)}{t},
\end{equation*}
where $C(t)$ satisfies
\begin{equation*}
\frac{\dot{C}}{t}=-2b\alpha(t).
\end{equation*}
By (\ref{heat2.1}), an integration yields
\begin{equation*}
C(t) =-\frac{ab}{4\theta}t^{2}+\frac{b}{2\theta}t.
\end{equation*}
Hence,
\begin{equation}
\mu(t)=-\frac{ab}{4\theta}t+\frac{b}{2\theta}.\label{heat2.4}
\end{equation}

\bigskip
Equation (\ref{eq5}):
\begin{align*}
&\qquad \dot{\nu}=-\frac{ab}{4\theta}\\
\\
&\Longrightarrow \nu =-\frac{ab}{4\theta}t +C.
\end{align*}
Let $C=\frac{b}{2\theta}$, one gets
\begin{equation}
\nu(t)=-\frac{ab}{4\theta}t+ \frac{b}{2\theta}.
\end{equation}

\bigskip
Equation (\ref{eq6}): 
\begin{align*}
&\qquad \varphi^{-1}\dot{\varphi}=\frac{a^{2}b^{2}}{16\theta}t^{2}-\frac{1}{2t}+\left(\frac{a}{2}-\frac{b^{2}}{4\theta} \right)\\
\\
&\Longrightarrow \varphi=\frac{C}{\sqrt{t}}\exp\left\lbrace \left(\frac{a}{2}-\frac{b^{2}}{4\theta} \right)t+\frac{a^{2}b^{2}}{48\theta}t^{3}\right\rbrace.
\end{align*}
Let $C=\left( \frac{1}{4\pi \theta}\right) ^{1/2}$, one gets
\begin{equation}
\varphi(t)=\left( \frac{1}{4\pi \theta \,t}\right) ^{1/2}\exp\left\lbrace \left(\frac{a}{2}-\frac{b^{2}}{4\theta} \right)t+\frac{a^{2}b^{2}}{48\theta}t^{3}\right\rbrace.
\end{equation}

\bigskip
\subsubsection*{Case 3 \hspace{15pt}$a^{2}+4\rho\theta<0$.}
Put $\alpha_{0}=\sqrt{-a^{2}-4\rho\theta}$.

\bigskip
Equation (\ref{eq1}): 
\begin{align*}
&\qquad \dot{\alpha}=4\theta\left( \alpha-\frac{a}{4\theta}\right)^{2}+\frac{\alpha_{0}^{2}}{4\theta}\\
\\
&\Longrightarrow \alpha=\frac{a}{4\theta}+\frac{\alpha_{0}}{4\theta}\tan(\alpha_{0}\,t+C).
\end{align*}
Let $C=\frac{\pi}{2}$, one gets
\begin{equation}
\alpha(t)=\frac{a}{4\theta}-\frac{\sqrt{-a^{2}-4\rho\theta}}{4\theta}\cot\left( \sqrt{-a^{2}-4\rho\theta} \,t\right).\label{heat3.1}
\end{equation}

\bigskip
Equation (\ref{eq2}):
\begin{align*}
&\qquad \beta^{-1}\dot{\beta}=-\alpha_{0}\cot(\alpha_{0}\,t)\\
\\
&\Longrightarrow \beta=\frac{C}{\sin(\alpha_{0}\,t)}.
\end{align*}
Let $C=\frac{\alpha_{0}}{2\theta}$, one gets
\begin{equation}
\beta(t)=\frac{\sqrt{-a^{2}-4\rho\theta}}{2\theta \sin\left( \sqrt{-a^{2}-4\rho\theta} \,t\right)}.
\end{equation}

\bigskip
Equation (\ref{eq3}):
\begin{align*}
&\qquad \dot{\gamma}=\frac{\alpha_{0}^{2}}{4\theta}\csc^{2}(\alpha_{0}\,t)\\
\\
&\Longrightarrow \gamma=-\frac{\alpha_{0}}{4\theta}\cot(\alpha_{0}\,t)+C.
\end{align*}
Let $C=\frac{a}{4\theta}$, one gets
\begin{equation}
\gamma(t)=\frac{a}{4\theta}-\frac{\sqrt{-a^{2}-4\rho\theta}}{4\theta}\cot\left( \sqrt{-a^{2}-4\rho\theta} \,t\right).
\end{equation}

\bigskip
Equation (\ref{eq4}): $\mu$ has the form
\begin{equation*}
\mu(t)=\frac{C(t)}{\sin(\alpha_{0}\,t)},
\end{equation*}
where $C(t)$ satisfies
\begin{equation*}
\frac{\dot{C}}{\sin(\alpha_{0}\,t)}=-2b\alpha(t).
\end{equation*}
By (\ref{heat3.1}), an integration yields
\begin{equation*}
C(t) =\frac{ab}{2\theta\alpha_{0}}\cos(\alpha_{0}\,t)+\frac{b}{2\theta}\sin(\alpha_{0}\,t).
\end{equation*}
Hence,
\begin{equation}
\mu(t)=\frac{ab}{2\theta\sqrt{-a^{2}-4\rho\theta}}\cot\left( \sqrt{-a^{2}-4\rho\theta} \,t\right)+\frac{b}{2\theta}.
\end{equation}

\bigskip
Equation (\ref{eq5}):
\begin{align*}
&\qquad \dot{\nu}=\frac{ab\cos(\alpha_{0}\,t)}{2\theta\sin^{2}(\alpha_{0}\,t)}\\
\\
&\Longrightarrow \nu =-\frac{ab}{2\theta\alpha_{0}\sin(\alpha_{0}\,t)} +C.
\end{align*}
Let $C=\frac{b}{2\theta}$, one gets
\begin{equation}
\nu(t)=-\frac{ab}{2\theta\sqrt{-a^{2}-4\rho\theta}\sin\left( \sqrt{-a^{2}-4\rho\theta} \,t\right)}+ \frac{b}{2\theta}.
\end{equation}

\bigskip
Equation (\ref{eq6}): 
\begin{align*}
&\qquad \varphi^{-1}\dot{\varphi}=\frac{a^{2}b^{2}}{4\theta\alpha_{0}^{2}}\csc^{2}(\alpha_{0}\,t)-\frac{\alpha_{0}}{2}\cot(\alpha_{0}\,t)+\left(\frac{a}{2}+\frac{\rho b^{2}}{\alpha_{0}^{2}} \right)\\
\\
&\Longrightarrow \varphi=\frac{C}{\sin^{1/2}(\alpha_{0}\,t)}\exp\left\lbrace \left(\frac{a}{2}+\frac{\rho b^{2}}{\alpha_{0}^{2}} \right)t-\frac{a^{2}b^{2}}{4\theta \alpha_{0}^{3}}\cot(\alpha_{0}\,t)\right\rbrace.
\end{align*}
Let $C=\left( \frac{\alpha_{0}}{4\pi \theta}\right) ^{1/2}$, one gets
\begin{align}
\notag
\varphi(t)&=\left( \frac{\sqrt{-a^{2}-4\rho\theta}}{4\pi \theta \sin\left( \sqrt{-a^{2}-4\rho\theta} \,t\right)}\right) ^{1/2}\\
&\quad \cdot \exp\left\lbrace \left(\frac{a}{2}-\frac{\rho b^{2}}{a^{2}+4\rho\theta} \right)t-\frac{a^{2}b^{2}}{4\theta (-a^{2}-4\rho\theta)^{3/2}}\cot\left( \sqrt{-a^{2}-4\rho\theta} \,t\right)\right\rbrace.
\end{align}

\bigskip
\subsection{Closed heat kernels}

Making use of ansatz (\ref{ansatz}), we summarise the results on coefficients in the following theorem. 
\begin{thm}\label{complete kernel}
Suppose that $\theta >0$, $a, b\in \R$, $\rho \in \R \setminus \{0\}$. The heat kernels $P(t; x, x_{0})$ of the perturbed Ornstein-Uhlenbeck operators $L=-\theta \partial_{x}^{2}+(ax+b)\partial_{x}+\rho x^{2}$ are given by

\begin{enumerate}
\item $a^{2}+4\rho\theta > 0$.
\begin{align}
\notag
&\quad P(t; x, x_{0})\\
\notag
&=\left(\frac{\sqrt{a^{2}+4\rho\theta}}{4\pi\theta\sinh\left(\sqrt{a^{2}+4\rho\theta}\,t \right)} \right) ^{1/2}\\
\notag
&\quad \cdot\exp\left\lbrace \left(\frac{a}{2}-\frac{\rho b^{2}}{a^{2}+4\rho\theta} \right)t-\frac{a^{2}b^{2}}{4\theta(a^{2}+4\rho\theta)^{3/2}}\coth\left(\sqrt{a^{2}+4\rho\theta}\,t \right)\right\rbrace\\
\notag
&\quad \cdot\exp\left\lbrace\left( \frac{a}{4\theta}-\frac{\sqrt{a^{2}+4\rho\theta}}{4\theta}\coth\left(\sqrt{a^{2}+4\rho\theta}\,t \right) \right) \left( x^{2}+x_{0}^{2}\right) \right\rbrace\\
\notag
&\quad \cdot\exp \left\lbrace \frac{\sqrt{a^{2}+4\rho\theta}}{2\theta \sinh\left(\sqrt{a^{2}+4\rho\theta}\,t \right)}xx_{0} \right\rbrace\\
\notag
&\quad \cdot\exp\left\lbrace\left( -\frac{ab}{2\theta\sqrt{a^{2}+4\rho\theta}}\coth\left(\sqrt{a^{2}+4\rho\theta}\,t \right)+\frac{b}{2\theta} \right)x \right\rbrace \\
&\quad \cdot\exp\left\lbrace\left( \frac{ab}{2\theta\sqrt{a^{2}+4\rho\theta}\sinh\left(\sqrt{a^{2}+4\rho\theta}\,t \right)}+\frac{b}{2\theta}\right)x_{0} \right\rbrace.\label{type1}
\end{align}

\bigskip
\item $a^{2}+4\rho\theta = 0$.
\begin{align}
\notag
&\quad P(t; x, x_{0})\\
\notag
&=\left( \frac{1}{4\pi \theta \,t}\right) ^{1/2}\exp\left\lbrace \left(\frac{a}{2}-\frac{b^{2}}{4\theta} \right)t+\frac{a^{2}b^{2}}{48\theta}t^{3}\right\rbrace\\
&\quad \cdot\exp\left\lbrace\left(  \frac{a}{4\theta}-\frac{1}{4\theta \,t}\right) \left( x^{2}+x_{0}^{2}\right)+ \frac{1}{2\theta \,t}xx_{0}+ \left( -\frac{ab}{4\theta}t+\frac{b}{2\theta}\right)\left( x+x_{0}\right)\right\rbrace.\label{type2}
\end{align}

\bigskip
\item $a^{2}+4\rho\theta < 0$.
\begin{align}
\notag
&\quad P(t; x, x_{0})\\
\notag
&=\left( \frac{\sqrt{-a^{2}-4\rho\theta}}{4\pi \theta \sin\left( \sqrt{-a^{2}-4\rho\theta} \,t\right)}\right) ^{1/2}\\
\notag
&\quad \cdot\exp\left\lbrace \left(\frac{a}{2}-\frac{\rho b^{2}}{a^{2}+4\rho\theta} \right)t-\frac{a^{2}b^{2}}{4\theta (-a^{2}-4\rho\theta)^{3/2}}\cot\left( \sqrt{-a^{2}-4\rho\theta} \,t\right)\right\rbrace\\
\notag
&\quad \cdot\exp\left\lbrace\left( \frac{a}{4\theta}-\frac{\sqrt{-a^{2}-4\rho\theta}}{4\theta}\cot\left( \sqrt{-a^{2}-4\rho\theta} \,t\right) \right) \left( x^{2}+x_{0}^{2}\right) \right\rbrace\\
\notag
&\quad \cdot\exp \left\lbrace \frac{\sqrt{-a^{2}-4\rho\theta}}{2\theta \sin\left( \sqrt{-a^{2}-4\rho\theta} \,t\right)}xx_{0} \right\rbrace\\
\notag
&\quad \cdot\exp\left\lbrace\left( \frac{ab}{2\theta\sqrt{-a^{2}-4\rho\theta}}\cot\left( \sqrt{-a^{2}-4\rho\theta} \,t\right)+\frac{b}{2\theta} \right)x \right\rbrace \\
&\quad \cdot\exp\left\lbrace\left( -\frac{ab}{2\theta\sqrt{-a^{2}-4\rho\theta}\sin\left( \sqrt{-a^{2}-4\rho\theta} \,t\right)}+ \frac{b}{2\theta}\right)x_{0} \right\rbrace.\label{type3}
\end{align}
\end{enumerate}
\end{thm}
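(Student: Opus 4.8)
The plan is to verify that the functions $P(t;x,x_{0})$ displayed in the three cases are indeed the heat kernels of $L=-\theta\partial_{x}^{2}+(ax+b)\partial_{x}+\rho x^{2}$, and this reduces almost entirely to the ansatz already set up in Section \ref{kernel}. First I would recall that, since $L$ has second-order part $-\theta\partial_{x}^{2}$, linear drift $(ax+b)\partial_{x}$ and quadratic zero-order term $\rho x^{2}$, substituting the Gaussian ansatz $P=\varphi(t)\exp\{\alpha x^{2}+\beta xx_{0}+\gamma x_{0}^{2}+\mu x+\nu x_{0}\}$ into the heat equation $\partial_{t}P=-LP$ (equivalently $\partial_{t}P+\theta\partial_{x}^{2}P-(ax+b)\partial_{x}P-\rho x^{2}P=0$ with the appropriate sign convention) and matching the coefficients of $x^{2}$, $x$, $1$ as well as the $xx_{0}$, $x_{0}^{2}$, $x_{0}$ pieces forces exactly the ODE system \eqref{eq1}--\eqref{eq6}. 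I would state this derivation in one short paragraph; it is a direct polynomial identification in $x$ once one computes $\partial_{x}P=(2\alpha x+\beta x_{0}+\mu)P$ and $\partial_{x}^{2}P=((2\alpha x+\beta x_{0}+\mu)^{2}+2\alpha)P$.

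Next I would integrate the system in each of the three regimes, which is precisely the content of the "Identification of coefficients" subsection: \eqref{eq1} is a Riccati equation with constant coefficients whose qualitative behaviour (two real roots, a double root, or complex roots) is governed by the discriminant $a^{2}+4\rho\theta$ — the same trichotomy that appeared in the Hamiltonian analysis — giving $\coth$, $1/t$, and $\cot$ profiles for $\alpha$ respectively; \eqref{eq2} and \eqref{eq4} are then linear in $\beta$ and $\mu$ once $\alpha$ is known; \eqref{eq3} and \eqref{eq5} are pure quadratures; and \eqref{eq6} is a final quadrature for $\log\varphi$. I would present the three cases in the same order as the theorem, quoting the formulas \eqref{heat1.1}--\eqref{heat3.1} and their companions, and emphasise that each indefinite integration introduces one free constant $C$.

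The one genuinely non-mechanical point — and the step I expect to be the main obstacle — is pinning down these free constants. The natural normalisation $\lim_{t\to 0^{+}}P(t;x,x_{0})=\delta(x-x_{0})$ would determine them, but it is cleaner, as the excerpt does, to fix them by matching the already-known Hermite-type kernel obtained in \cite{F12} at $a=0$: sending $a\to 0$ in each candidate formula must reproduce the $b$-perturbed harmonic-oscillator heat kernel, and this single requirement selects $C=1$ in \eqref{heat1.1}, the various $C=\sqrt{a^{2}+4\rho\theta}/(2\theta)$, $C=a/(4\theta)$, $C=b/(2\theta)$ in the linear and quadrature steps, and the prefactor constant $C=(\sqrt{a^{2}+4\rho\theta}/4\pi\theta)^{1/2}$ (and its analogues) in $\varphi$ that guarantees the correct $(4\pi\theta t)^{-1/2}$ small-time asymptotics. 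I would carry this out case by case, checking in particular that the $a^{2}+4\rho\theta\to 0$ limit of Case 1 (and of Case 3) degenerates continuously into Case 2, which is a useful internal consistency check on the choice of constants.

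Finally I would assemble the pieces: substituting the identified $\alpha,\beta,\gamma,\mu,\nu,\varphi$ into \eqref{ansatz} yields \eqref{type1}, \eqref{type2}, \eqref{type3} verbatim, and a remark that uniqueness of the heat kernel in the class of such Gaussians (or, alternatively, uniqueness of the Cauchy problem under Gaussian bounds) shows these are the heat kernels, not merely solutions of the system. The routine verification that each displayed $P$ actually satisfies $\partial_{t}P=-LP$ can be left to the reader or done by reversing the identification above.
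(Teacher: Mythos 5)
Your proposal mirrors the paper's own proof: substitute the Gaussian ansatz, derive the ODE system \eqref{eq1}--\eqref{eq6}, integrate it case-by-case according to the sign of $a^{2}+4\rho\theta$, and fix the integration constants by matching the $a=0$ Hermite-type kernel of \cite{F12}, which is exactly what the ``Identification of coefficients'' subsection does before assembling the results into the theorem. The one slip is in your parenthetical heat equation: the sign pattern that actually reproduces \eqref{eq1}--\eqref{eq6} is $\partial_{t}P-\theta\partial_{x}^{2}P+(ax+b)\partial_{x}P+\rho x^{2}P=0$, i.e.\ $\partial_{t}P+LP=0$, not the one you wrote, while your additional consistency checks (continuous degeneration of Cases~1 and~3 into Case~2 and the uniqueness remark) are sound and go slightly beyond what the paper records.
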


\bigskip
\section{Further discussions on geodesics and heat kernels}\label{discussion}

\bigskip
\subsection{Critical cases}
The critical case, i.e. $a^{2}+4\rho\theta=0$ is a watershed in the discussion of singularities. Due to its independent interest, we formulate it as a proposition.
\begin{prop}
Suppose that $\theta >0$, $a, b\in \R$, $\rho \in \R \setminus \{0\}$ and $a^{2}+4\rho\theta=0$. Perturbed Ornstein-Uhlenbeck operators $L=-\theta \partial_{x}^{2}+(ax+b)\partial_{x}+\rho x^{2}$ have unique geodesics $\left\lbrace x(s) | 0 \leq s \leq 1\right\rbrace $ fulfilling boundary conditions $ x(0)=x_{0}, \: x(1)=x$ with
\begin{equation}
x(s)=\frac{ab}{2}s^{2}+\left( x-x_{0}-\frac{ab}{2}\right) s+x_{0},  \tag{\ref{g2}}
\end{equation}
and the heat kernels are given by 
\begin{align}
\notag
&\quad P(t; x, x_{0})\\
\notag
&=\left( \frac{1}{4\pi \theta \,t}\right) ^{1/2}\exp\left\lbrace \left(\frac{a}{2}-\frac{b^{2}}{4\theta} \right)t+\frac{a^{2}b^{2}}{48\theta}t^{3}\right\rbrace\\
&\quad \cdot\exp\left\lbrace\left(  \frac{a}{4\theta}-\frac{1}{4\theta \,t}\right) \left( x^{2}+x_{0}^{2}\right)+ \frac{1}{2\theta \,t}xx_{0}+ \left( -\frac{ab}{4\theta}t+\frac{b}{2\theta}\right)\left( x+x_{0}\right)\right\rbrace.\tag{\ref{type2}}
\end{align}
\end{prop}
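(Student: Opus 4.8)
The plan is to establish the two assertions separately: the geodesic formula \eqref{g2} is extracted from the non-homogeneous Hamiltonian system, and the kernel \eqref{type2} from the ODE cascade \eqref{eq1}--\eqref{eq6} governing the ansatz \eqref{ansatz}. Both are the specialisation to the critical regime $a^{2}+4\rho\theta=0$ of Theorems \ref{complete geodesics} and \ref{complete kernel}, so the proof amounts to a self-contained rederivation in this single case.

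For the geodesics: since $a^{2}+4\rho\theta=0$ the matrix $A$ in \eqref{nonhom} satisfies $A^{2}=(a^{2}+4\rho\theta)\I=0$, hence it is nilpotent of step two and the fundamental matrix collapses to $\Phi(s)=\exp(sA)=\I+sA$ as in \eqref{fm2}, with inverse \eqref{fm-2}. Writing the solution in the form $X(s)=\Phi(s)C(s)$ and solving $\dot C(s)=\Phi^{-1}(s)B$ by a single quadrature produces $C_{1}(s)=bs-\tfrac{ab}{2}s^{2}+c_{1}$ and $C_{2}(s)=\rho b s^{2}+c_{2}$; multiplying out $\Phi(s)C(s)$ and using $a^{2}+4\rho\theta=0$ once more to kill the cubic term, the first component reduces to the quadratic polynomial $x(s)=\tfrac{ab}{2}s^{2}+d_{2}s+d_{1}$ of \eqref{x2}. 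Imposing the boundary conditions \eqref{bc} yields a $2\times 2$ linear system for $(d_{1},d_{2})$ that is triangular with unit diagonal, hence uniquely solvable, with $d_{1}=x_{0}$ and $d_{2}=x-x_{0}-\tfrac{ab}{2}$; substituting back gives \eqref{g2} together with its uniqueness.

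For the heat kernel: inserting the ansatz \eqref{ansatz} into the heat equation for $L$ and equating the coefficients of $1,x,x_{0},x^{2},xx_{0},x_{0}^{2}$ reproduces the system \eqref{eq1}--\eqref{eq6}. In the critical regime the Riccati equation \eqref{eq1} has the double root $a/(4\theta)$, so it becomes $\dot\alpha=4\theta\bigl(\alpha-\tfrac{a}{4\theta}\bigr)^{2}$, which integrates to $\tfrac{1}{\alpha-a/(4\theta)}=-4\theta t+C$; taking $C=0$ gives $\alpha(t)=\tfrac{a}{4\theta}-\tfrac{1}{4\theta t}$. Feeding this $\alpha$ successively into \eqref{eq2}--\eqref{eq6} turns each into a linear ODE or a bare quadrature, solved in closed form as $\beta(t)=\tfrac{1}{2\theta t}$, $\gamma(t)=\tfrac{a}{4\theta}-\tfrac{1}{4\theta t}$, $\mu(t)=\nu(t)=-\tfrac{ab}{4\theta}t+\tfrac{b}{2\theta}$, and $\varphi(t)=(4\pi\theta t)^{-1/2}\exp\{(\tfrac{a}{2}-\tfrac{b^{2}}{4\theta})t+\tfrac{a^{2}b^{2}}{48\theta}t^{3}\}$; assembling these into \eqref{ansatz} produces \eqref{type2}.

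The only delicate step is the selection of the six integration constants, one per ODE. I would fix them by demanding the correct short-time behaviour $P(t;x,x_{0})\to\delta(x-x_{0})$ as $t\to0^{+}$ — equivalently, that the Gaussian exponent degenerate to $-|x-x_{0}|^{2}/(4\theta t)$ with prefactor $(4\pi\theta t)^{-1/2}$, which pins down the singular $1/t$ terms in $\alpha,\beta,\gamma$ and the leading factor of $\varphi$ — and by matching the remaining finite constants against the Hermite case $a=0$ of \cite{F12}, exactly as in Section \ref{kernel}. Granting this, the displayed formulae are verbatim the $a^{2}+4\rho\theta=0$ instances of Theorems \ref{complete geodesics} and \ref{complete kernel}, so the proposition follows; everything else is routine integration and simplification.
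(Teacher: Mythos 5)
Your proposal is correct and follows essentially the same route as the paper: the geodesic part is exactly the Case~2 computation of Section~\ref{geo} (nilpotent $A$, $\Phi(s)=\I+sA$, variation of parameters, triangular boundary system), and the kernel part is exactly the Case~2 cascade of Section~\ref{kernel} with the same closed-form solutions of \eqref{eq1}--\eqref{eq6}. Your way of pinning the integration constants (short-time $\delta$-behaviour plus matching the Hermite case $a=0$) is a slightly more principled phrasing of the paper's own normalisation, and your remark that substituting the ansatz into the heat equation reproduces \eqref{eq1}--\eqref{eq6} is a correct verification the paper leaves implicit.
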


\bigskip
\begin{rmk}\label{rmk2}
If $ab\neq 0$, there exists a unique parabola connecting $x_{0}$ and $x$. If $ab = 0$, we may assume $a\neq 0$ and $b=0$ since other cases are of Hermite type (cf. \cite{F12}). The operators in the family 
$$
\left\lbrace L=-\theta \partial_{x}^{2}+ ax\partial_{x}+\rho x^{2}| a^{2}+4\rho\theta=0 \right\rbrace 
$$ 
have exactly the same geodesic\textemdash a straight line connecting $x_{0}$ and $x$. In this case, one infers that $\rho=-\frac{a^{2}}{4\theta}<0$ is the minimum value that $\rho$ could take to keep the operators regular. In particular, perturbed Ornstein-Uhlenbeck operators $L=-\theta \partial_{x}^{2}+ ax\partial_{x}+\rho x^{2}$ share the same geodesics with Laplace operator $L_{0}=-\partial_{x}^{2}$, but the heat kernel of $L$ is extinct from Gaussian. In other words, geodesics do not suffice to characterize heat kernels in our cases. 
\end{rmk}

\bigskip
\subsection{Heat kernels in higher dimensions}
For brevity, we assort the heat kernels in Theorem \ref{complete kernel}.

\bigskip
\begin{definition}\label{def1}
For $L=-\theta \partial_{x}^{2}+(ax+b)\partial_{x}+\rho x^{2}$, formulae (\ref{type1}) is called  heat kernels of Type I and denoted by $P_{I}(t; x, x_{0})$. Similarly, (\ref{type2}) and (\ref{type3}) are called Type II and Type III, and denoted by $P_{II}(t; x, x_{0})$ and $P_{III}(t; x, x_{0})$ respectively.
\end{definition}

\bigskip
We next formulate heat kernels for operators in higher dimensions, and illustrate them to $L^{\pm}$ in higher dimensions.
\begin{prop}
Let 
\begin{equation*}
L=-\sum_{j=1}^{n}\theta_{j} \partial_{x_{j}}^{2}+\sum_{j=1}^{n}(a_{j}x_{j}+b_{j})\partial_{x_{j}}+\sum_{j=1}^{n}\rho_{j} x_{j}^{2}
\end{equation*}
be a perturbed Ornstein-Uhlenbeck operator in $n$ spatial variables, where $\theta_{j}>0$, $a_{j}, b_{j}\in \R $ and $\rho_{j}\in \R \setminus{0}$. The heat kernel $P(t; x, x_{0})$ is given by
\begin{equation*}
P(t; x, x_{0})=P_{I}(t; x, x_{0})P_{II}(t; x, x_{0})P_{III}(t; x, x_{0})
\end{equation*}
\end{prop}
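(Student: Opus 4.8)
The plan is to exploit the separability of the operator: since $L=\sum_{j=1}^{n}L_j$ with $L_j=-\theta_j\partial_{x_j}^2+(a_jx_j+b_j)\partial_{x_j}+\rho_jx_j^2$ acting only on the variable $x_j$, and the operators $L_j$ commute pairwise (each involves a disjoint set of variables), the associated heat semigroup factors as $e^{-tL}=\prod_{j=1}^{n}e^{-tL_j}$. Consequently the heat kernel of $L$ on $\R^n$ is the product of the one-dimensional heat kernels, $P(t;x,x_0)=\prod_{j=1}^{n}P^{(j)}(t;x_j,x_{0,j})$, where $P^{(j)}$ is the kernel of $L_j$ furnished by Theorem \ref{complete kernel}.

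The key steps, in order, are as follows. First I would verify the factorization of the semigroup rigorously: write down that a function of the form $u(t,x)=\prod_j u_j(t,x_j)$ solves $\partial_t u+Lu=0$ whenever each $u_j$ solves $\partial_t u_j+L_ju_j=0$, which follows by the product rule since $L_j$ differentiates only in $x_j$; then note that the initial condition $\prod_j\delta(x_j-x_{0,j})=\delta(x-x_0)$ is reproduced, so by uniqueness of the fundamental solution $P(t;x,x_0)=\prod_j P^{(j)}(t;x_j,x_{0,j})$. Second, I would invoke Theorem \ref{complete kernel} to identify each factor $P^{(j)}$ according to the sign of $a_j^2+4\rho_j\theta_j$: it is of Type I, II, or III respectively. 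Third, I would group the indices $j$ into three (possibly empty) sets according to this trichotomy and observe that the product over each group is, by Definition \ref{def1}, exactly $P_I(t;x,x_0)$, $P_{II}(t;x,x_0)$, or $P_{III}(t;x,x_0)$ — here one reads Definition \ref{def1} as implicitly defining these as products over the relevant coordinates, since a genuinely multi-variable statement forces this reading. Assembling the three groups gives $P(t;x,x_0)=P_I P_{II} P_{III}$.

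I do not anticipate a serious analytic obstacle; the argument is essentially bookkeeping built on separability. The one point requiring care — and the place I would expect a referee to press — is the precise meaning of the notation $P_I, P_{II}, P_{III}$ in the multidimensional setting: Definition \ref{def1} introduces them as one-dimensional objects, so the statement $P=P_I P_{II} P_{III}$ only makes literal sense once one agrees that each $P_{\bullet}(t;x,x_0)$ denotes the product of the corresponding one-dimensional kernels over those coordinates whose parameters fall into that case (with an empty product equal to $1$). I would make this convention explicit at the start of the proof. A secondary, purely technical remark is that one should note the kernels in Theorem \ref{complete kernel} are exactly the fundamental solutions — this is what the ansatz construction in Section \ref{kernel} established — so that the uniqueness appeal in the first step is legitimate.
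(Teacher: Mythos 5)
Your proposal is correct and follows essentially the same route as the paper's own proof: separability of $L$ into commuting one-variable operators, factorization of the heat kernel into a product of the $n$ one-dimensional kernels from Theorem \ref{complete kernel}, and grouping the factors by the sign of $a_j^2+4\rho_j\theta_j$ into the three types of Definition \ref{def1}. You are in fact more careful than the paper, which leaves implicit both the semigroup/uniqueness justification of the factorization and the convention that $P_I$, $P_{II}$, $P_{III}$ denote products over the coordinates falling in each case (with empty products equal to $1$).
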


\begin{proof}
Due to the independence of each spatial variable, the heat kernel $P$ is a product of $n$ 1-d heat kernels of the form (\ref{type1}), (\ref{type2}) and (\ref{type3}). According to the sign of the quantity $a^{2}+4\rho\theta$ and Definition \ref{def1}, these heat kernels can be assorted into 3 parts, i.e. $P_{I}$, $P_{II}$ and $P_{III}$, which completes the proof. 
\end{proof}

\bigskip
\begin{cor}
The $n$-dimensional perturbed Ornstein-Uhlenbeck operator $L^{+}=-\Delta+x\cdot\nabla+|x|^{2}$ has heat kernel
\begin{align*}
P(t; x, x_{0})&=\left(\frac{\sqrt{5}e^{t}}{4\pi\sinh(\sqrt{5}t)} \right) ^{\frac{n}{2}}\\
&\quad \cdot\exp\left\lbrace \frac{1-\sqrt{5}\coth(\sqrt{5}t)}{4}\left(|x|^{2}+|x_{0}|^{2} \right)+\frac{\sqrt{5}}{2\sinh(\sqrt{5}t)}x\cdot x_{0} \right\rbrace.
\end{align*}
The $n$-dimensional perturbed Ornstein-Uhlenbeck operator $L^{-}=-\Delta+x\cdot\nabla-|x|^{2}$ has heat kernel
\begin{align*}
P(t; x, x_{0})&=\left(\frac{\sqrt{3}e^{t}}{4\pi\sin(\sqrt{3}t)} \right) ^{\frac{n}{2}}\\
&\quad \cdot\exp\left\lbrace \frac{1-\sqrt{3}\cot(\sqrt{3}t)}{4}\left(|x|^{2}+|x_{0}|^{2} \right)+\frac{\sqrt{3}}{2\sin(\sqrt{3}t)}x\cdot x_{0} \right\rbrace. 
\end{align*}
\end{cor}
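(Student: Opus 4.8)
The plan is to reduce the $n$-dimensional statement to the one-dimensional results already recorded in Theorem \ref{complete kernel} by separation of variables, and then to specialise the parameters. Since $L^{\pm}=\sum_{j=1}^{n}L_{j}^{\pm}$, where each $L_{j}^{\pm}=-\partial_{x_{j}}^{2}+x_{j}\partial_{x_{j}}\pm x_{j}^{2}$ acts only on the $j$-th coordinate and the summands mutually commute, the heat semigroup factorises, $e^{-tL^{\pm}}=e^{-tL_{1}^{\pm}}\otimes\cdots\otimes e^{-tL_{n}^{\pm}}$, so the heat kernel is the product $P(t;x,x_{0})=\prod_{j=1}^{n}P^{(1)}(t;x_{j},x_{0,j})$ of the one-dimensional kernels. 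This is the same mechanism used in the preceding Proposition, so the only work left is to compute $P^{(1)}$ explicitly for the two operators and take the product.

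For $L^{+}$ one has $\theta=a=\rho=1$ and $b=0$, hence $a^{2}+4\rho\theta=5>0$ and $P^{(1)}$ is of Type I, i.e. given by formula (\ref{type1}) with $\sqrt{a^{2}+4\rho\theta}=\sqrt{5}$. Setting $b=0$ makes the coefficients $\mu$ and $\nu$ vanish, so the last two exponential factors in (\ref{type1}) disappear, the second exponential collapses to $\exp\{\tfrac{a}{2}t\}=\exp\{t/2\}$, and one is left with $\alpha(t)=\gamma(t)=\tfrac14-\tfrac{\sqrt5}{4}\coth(\sqrt5\,t)$ and $\beta(t)=\tfrac{\sqrt5}{2\sinh(\sqrt5\,t)}$. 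For $L^{-}$ one has $\theta=a=1$, $\rho=-1$, $b=0$, so $a^{2}+4\rho\theta=-3<0$ and $P^{(1)}$ is of Type III, given by (\ref{type3}) with $\sqrt{-a^{2}-4\rho\theta}=\sqrt3$; again $b=0$ kills the $\mu$-, $\nu$- and $b$-dependent factors and leaves the prefactor $\bigl(\tfrac{\sqrt3}{4\pi\sin(\sqrt3\,t)}\bigr)^{1/2}\exp\{t/2\}$ together with the quadratic form in $\alpha,\beta$.

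Finally I would take the $n$-fold product of these one-dimensional kernels: the scalar prefactors multiply to $\bigl(\tfrac{\sqrt5}{4\pi\sinh(\sqrt5\,t)}\bigr)^{n/2}e^{nt/2}=\bigl(\tfrac{\sqrt5\,e^{t}}{4\pi\sinh(\sqrt5\,t)}\bigr)^{n/2}$ (and likewise with $\sinh,\sqrt5$ replaced by $\sin,\sqrt3$ for $L^{-}$), while the exponents add coordinate-wise, so that $\sum_{j}(x_{j}^{2}+x_{0,j}^{2})=|x|^{2}+|x_{0}|^{2}$ and $\sum_{j}x_{j}x_{0,j}=x\cdot x_{0}$. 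Collecting terms yields exactly the two displayed formulae. The computation is entirely routine; the only point requiring a small check is that the factor $e^{at/2}$ coming from each one-dimensional kernel combines with the $(4\pi\theta\sinh)^{-1/2}$ prefactor to produce the stated $(e^{t})^{n/2}$ after taking the $n$-th power, and that no hidden $b$-terms survive — both are immediate once $b=0$ is imposed.
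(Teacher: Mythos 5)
Your argument is correct and is exactly the route the paper takes: the preceding Proposition factorises the $n$-dimensional kernel into a product of one-dimensional kernels by independence of the coordinates, and the corollary is then just Theorem \ref{complete kernel} specialised to $\theta=a=1$, $b=0$, $\rho=\pm1$ (Type I with $\sqrt{5}$ for $L^{+}$, Type III with $\sqrt{3}$ for $L^{-}$), with the $n$ copies of $e^{t/2}$ combining into $(e^{t})^{n/2}$. Nothing is missing.
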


\bigskip
\section{Conclusions}
We first give direct answers to the questions proposed at the beginning of this paper. 
\begin{itemize}
\item The sign of the quantity $a^{2}+4\rho\theta$ decides whether singularities arise or not. Specifically, perturbed Ornstein-Uhlenbeck operators $L=-\theta \partial_{x}^{2}+ ax\partial_{x}+\rho x^{2}$ is regular if $a^{2}+4\rho\theta\geq 0$, and irregular if $a^{2}+4\rho\theta < 0$. Remark \ref{rmk1}, Remark \ref{rmk2} and Remark \ref{rmk3} make brief comparisons to Hermitian case on singularities. 
\item We characterize the singularities from the point of geodesics and heat kernels. Complete discussions for each case are summarised in Theorem \ref{complete geodesics} and Theorem \ref{complete kernel}.
\end{itemize} 

An obvious extension of this result would be to obtain geodesics and heat kernels for the operators like 
\begin{equation*}
L=-\theta \partial_{x}^{2}+(ax+b)\partial_{x}+\rho_{0} x^{2}+\rho_{1}x+\rho_{2}.
\end{equation*} 
For operators in a more general form 
\begin{equation*}
L=-\langle A\nabla, \nabla\rangle +\langle Bx,\nabla\rangle + \langle Cx,x \rangle ,
\end{equation*} 
this paper treats the diagonal case of the coefficient matrices. When $A$, $B$ and $C$ are not diagonal, some results have been reached. \cite{B99} essentially obtain the results for $A>0$ and $C>0$. However, $C<0$ is a specially interesting case to study, whose geometric analysis is absent in the literature.

\bigskip

\bigskip
\noindent
Sheng-Ya Feng\\
Department of Mathematics\\
East China University of Science and Technology\\
Shanghai 200237, P.R. China\\
\medskip
E-mail address: s.y.feng@ecust.edu.cn
\end{document}